\newtheorem{theorem}{Theorem}
\newtheorem{lemma}[theorem]{Lemma}
\theoremstyle{definition}
\newtheorem*{acknowledgement}{Acknowledgments}
\numberwithin{equation}{section}
\newcommand{\RR}{\mathbb R}
\newcommand{\claimproofend}{\hspace*{.1mm}\hspace{\fill}}
\begin{document}
\title{Bipartite complements of circle graphs}
\author{Louis Esperet}
\thanks{}
\address{Laboratoire G-SCOP (CNRS, Univ. Grenoble Alpes),
  Grenoble, France}
\email{louis.esperet@grenoble-inp.fr}
\author{Mat\v ej Stehl\'ik} \thanks{Partially supported by ANR Projects GATO
(\textsc{anr-16-ce40-0009-01}) and GrR (\textsc{anr-18-ce40-0032})} \address{Laboratoire G-SCOP, Univ.\ Grenoble Alpes, France}
\email{matej.stehlik@grenoble-inp.fr} \date{\today}

\begin{abstract}
Using an algebraic characterization of circle graphs, Bouchet proved
in 1999
that if a bipartite graph $G$ is the complement of a circle graph,
then $G$
is a circle graph. We give an elementary proof of
this result.
\end{abstract}
\maketitle

A graph is a \emph{circle graph} if it is the intersection graph of the
chords of a circle. Using an algebraic characterization of circle
graphs proved by Naji~\cite{Naj85} (as the class of graphs satisfying a certain
system of equalities over $\mathrm{GF}(2)$), Bouchet proved
the following result in~\cite{Bou99}.

\begin{theorem}[Bouchet~\cite{Bou99}]\label{th:bouchet}
If a bipartite graph $G$ is the complement of a circle graph,
then $G$
is a circle graph.
\end{theorem}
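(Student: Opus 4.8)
The plan is to work with the standard geometric model of circle graphs: a graph $H$ is a circle graph precisely when it is the crossing graph of a family of chords of a circle, equivalently the interlacement graph of a double occurrence word read along the boundary circle. So I would fix a chord diagram $D$ realising $\overline{G}$, with the vertices of $G$ split into the two sides $A$ and $B$ of the bipartition. Since $A$ and $B$ are independent in $G$, they are cliques in $\overline{G}$, and the first step is to record what a clique looks like in a chord diagram: a family of pairwise crossing chords must, after relabelling, read cyclically as $x_1\cdots x_k\,x_1\cdots x_k$ (equivalently, in the interval model a clique is a ``staircase'', all left endpoints preceding all right endpoints in the same order). Hence there is a boundary arc $U_A$ containing exactly one endpoint of every $A$-chord, with its complementary arc carrying the other endpoints, and likewise an arc $U_B$ for $B$.

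The engine of the proof is a single lemma about reversing an arc. If $S$ is a boundary arc and we reflect $D$ inside $S$, that is, reverse the cyclic order of the endpoints lying in $S$ while keeping the rest fixed, then the crossing relation of two chords $u,v$ is toggled exactly when both $u$ and $v$ have exactly one endpoint in $S$, and all other pairs are unaffected. I would prove this by the interleaving criterion for crossing, checking the four cases according to how many of the four endpoints of $\{u,v\}$ lie in $S$; reversal preserves betweenness inside $S$, so only the case of one endpoint from each chord alters the cyclic alternation. Reversing an arc is a self-homeomorphism of the disk, so the outcome is again a chord diagram, and each such move stays inside the class of circle graphs.

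To pass from $\overline{G}$ to $G$ one must complement \emph{every} adjacency at once. By the lemma, a single arc reversal achieves this only when some arc meets every chord exactly once, i.e.\ only when $\overline{G}$ is a permutation graph; in that case $G$ is obtained immediately and is a permutation graph, hence a circle graph. For the general case I would realise the global complementation topologically, passing to the projective plane $\RP^2$ obtained from the disk by identifying antipodal boundary points, and this is where the bipartition does the real work: the single cross-cap plays the role of an arc that every chord meets exactly once, so rerouting the chords across it complements all crossings without the need for an equator. The $A$-chords and the $B$-chords, being cliques, turn into two nested (hence non-crossing) families, so that the only crossings that can survive are between $A$ and $B$, and I would use the bipartiteness of $G$ to force these surviving crossings to match exactly the edges of $G$. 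The main obstacle is precisely this last point: verifying that the rerouted family pulls back to an honest planar chord diagram, i.e.\ that the curves remain simple and that their crossing pattern is exactly the complement. It is here, rather than in the (routine) clique normal form or the reflection lemma, that the hypothesis that $G$ is bipartite is indispensable.
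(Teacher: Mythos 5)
You have correctly isolated the reversal lemma that drives the paper's proof: reflecting the endpoints inside a boundary arc $S$ toggles the crossing relation of a pair of chords exactly when each chord has one endpoint in $S$, so a single reversal complements all adjacencies precisely when some arc meets every chord exactly once. But you then assert that this situation occurs ``only when $\overline{G}$ is a permutation graph'' and abandon the route, when in fact the whole content of the theorem is that under the given hypotheses such an arc \emph{always exists} for a suitable diagram. The missing idea is a simultaneous bisection statement: if $X$ and $Y$ are disjoint even point sets on the circle, some line bisects both (a one-dimensional discrete ham sandwich / two-colour necklace splitting fact, provable by an intermediate-value argument on the $n$ ``half-windows'' of the $2n$ points). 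Applying this to the endpoint sets of the two colour classes, and observing that a line bisecting the $2n$ endpoints of a family of \emph{pairwise crossing} chords must cross every chord of that family (otherwise two chords of the family would lie in opposite open half-planes and hence be disjoint), one gets a line crossed by every red chord and every blue chord, i.e.\ the arc you were looking for. Your clique normal form $x_1\cdots x_k x_1\cdots x_k$ gives an arc $U_A$ meeting every $A$-chord once and an arc $U_B$ meeting every $B$-chord once, but these need not be combinable into one arc without the bisection argument, and you supply no substitute for it.

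The $\RP^2$ detour does not repair this. Rerouting chords through a cross-cap produces curves on a non-planar surface, and you give no mechanism for pulling the complemented intersection pattern back to a genuine chord diagram in the disk; you yourself flag this as ``the main obstacle'' and leave it unresolved. Moreover, in that part of your argument bipartiteness is invoked only as something that must be ``indispensable'' somewhere, whereas in the correct proof its role is completely concrete: it makes each colour class a pairwise-crossing family, which is exactly the hypothesis needed to force the bisecting line to cross all chords. As written, the proposal proves the theorem only in the case where the required arc happens to exist in the chosen diagram, and the general case is not established.
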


The known proofs of Naji's theorem are fairly involved~\cite{Gas97,GL18,Naj85,Tral17}, and
Bouchet~\cite{Bou99} (see also~\cite{DGS14}) asked whether, on the
other hand, Theorem~\ref{th:bouchet}  has an elementary proof. The purpose of this short note is to
present such a proof.

\medskip

We will need two simple lemmas.
Given a finite set of points $X \subset \RR^2$ of even cardinality, a line $\ell$
\emph{bisects} the set $X$ if each open half-plane defined by $\ell$ contains precisely
$|X|/2$ points.
The following lemma is an immediate consequence of the $2$-dimensional \emph{discrete ham sandwich
theorem} (see e.g.~\cite[Corollary~3.1.3]{Mat03}), and is equivalent to the \emph{necklace splitting problem} with two types of beads. In order to keep this note self-contained, we include a short proof.

\begin{lemma}\label{lem:hs-circle}
Let $X,Y \subset \RR^2$ be disjoint finite point sets of even cardinality on a circle $C$.
Then there exists a line $\ell$ simultaneously bisecting both $X$ and $Y$.
\end{lemma}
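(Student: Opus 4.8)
The plan is to reduce this two-dimensional statement to a purely combinatorial one-parameter argument, exploiting the fact that all points lie on the circle $C$. First I would observe that a bisecting line crosses $C$ in two points, and these two points split $C$ into two arcs; a bisecting line is thus essentially the same thing as a partition of the cyclic sequence of points into two contiguous arcs, one lying in each open half-plane. The key further observation is that if a line simultaneously bisects $X$ and $Y$, then each open half-plane contains $|X|/2 + |Y|/2 = (|X|+|Y|)/2$ points in total. So it suffices to search only among partitions into two arcs of \emph{equal} total size, and to arrange that one of them contains exactly $|X|/2$ points of $X$; the count for $Y$ then follows automatically.

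Then I would set $N = |X| + |Y|$, list the points in cyclic order $b_0, \ldots, b_{N-1}$ around $C$, and for each $i$ let $A_i = \{b_i, b_{i+1}, \ldots, b_{i+N/2 - 1}\}$ (indices mod $N$) be the arc of $N/2$ consecutive points starting at $b_i$, writing $g(i)$ for the number of points of $X$ in $A_i$. Two facts drive the argument: since $A_i$ and $A_{i+N/2}$ partition all the points, $g(i) + g(i+N/2) = |X|$; and since passing from $A_i$ to $A_{i+1}$ removes one point and adds one, $|g(i+1) - g(i)| \le 1$. As $g(0)$ and $g(N/2) = |X| - g(0)$ lie on opposite sides of $|X|/2$ (or both equal it), the discrete intermediate value theorem yields some $i$ with $g(i) = |X|/2$.

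Finally, I would convert this combinatorial cut back into a line. For the index $i$ with $g(i) = |X|/2$, the arc $A_i$ is bounded by the open gap between $b_{i-1}$ and $b_i$ and the open gap between $b_{i+N/2-1}$ and $b_{i+N/2}$; since the points are distinct, these gaps are nonempty arcs of $C$. Choosing one point in each gap and taking $\ell$ to be the chord through them, the two open half-planes defined by $\ell$ contain exactly $A_i$ and its complement, hence exactly $|X|/2$ points of $X$ and $N/2 - |X|/2 = |Y|/2$ points of $Y$ each.

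The main obstacle, and the step deserving the most care, is the passage between the geometric and combinatorial pictures: I need to verify that restricting to equal-size arcs loses no generality (which hinges on the total-count observation above) and that the chosen chord genuinely places all of $A_i$ strictly in one open half-plane and its complement in the other, with no point of $X \cup Y$ lying on $\ell$ — this uses that consecutive points on $C$ leave a nonempty gap. The intermediate value step itself is routine once the ``step size at most one'' bound is in hand.
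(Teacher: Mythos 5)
Your proposal is correct and follows essentially the same route as the paper: both arguments identify bisecting lines with arcs of $(|X|+|Y|)/2$ consecutive points, define the count of $X$-points in a sliding half-window, and apply a discrete intermediate value argument using the antipodal relation $g(i)+g(i+N/2)=|X|$ and the step bound $|g(i+1)-g(i)|\le 1$ (your $g$ is the paper's $f$ shifted by $\tfrac12|X|$). Your extra care in realizing the combinatorial cut as an actual chord through the two gaps is a point the paper passes over with a ``clearly,'' but the substance is identical.
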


\begin{proof}
Let $p_0, \ldots, p_{2n-1}$ be the points of $X \cup Y$ in cyclic
order along $C$. For $0\le i \le 2n-1$ we denote by $I_i$ the set
$\{p_i,p_{i+1} \ldots, p_{i+n-1}\}$ (here and in the remainder of the
proof, all indices are considered modulo $2n$).
Clearly, for every $0\le i \le n-1$, there exists a line $\ell_i$ in $\RR^2$
bisecting the points of $X \cup Y$, with $I_i$ on one side of $\ell_i$
and $I_{i+n}$ on the other side. For $0\le i \le 2n-1$,  define
$f(i)=|X \cap I_i|-\tfrac12|X|$. Note that since $X$ has even
cardinality, each $f(i)$ is an integer.

To prove the lemma, it suffices to show that $f(i)=0$ for some $0\le i
\le n-1$,
for then $|X \cap I_i|=\tfrac12|X|$ and $|Y \cap I_i|=\tfrac12(|X|+|Y|)-|X \cap I_i|=\tfrac12|Y|$.
If $f(0)=0$ then we are done, so let us assume that $f(0) \neq
0$. 
Without loss of generality $f(0)<0$, and hence $f(n)=-f(0)>0$.
Since $f(i+1)-f(i)\in \{-1,0,1\}$ for all $0\le i \le n-1$, there exists
$1\le i \le  n-1$ such that $f(i)=0$, as required.
\end{proof}

\begin{lemma}\label{lem:cho}
Consider a set of pairwise intersecting chords $c_1,\ldots,c_n$ of a circle $C$, with
pairwise distinct endpoints. Then any line $\ell$ that bisects
the $2n$ endpoints of the chords intersects all the chords $c_1,\ldots,c_n$.
\end{lemma}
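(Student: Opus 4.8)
The plan is to translate everything into the cyclic combinatorics of how the endpoints sit on $C$. First I would note that since $\ell$ bisects the $2n$ endpoints, it has exactly $n$ of them strictly in each open half-plane, so in particular no endpoint lies on $\ell$. Because there are points of $C$ strictly on both sides of $\ell$ and $C$ is connected, $\ell$ must meet the circle in exactly two points, splitting $C$ into two arcs $A$ and $B$; one open half-plane then contains precisely the $n$ endpoints lying on $A$, and the other the $n$ endpoints lying on $B$. With this picture in place, a chord $c_i$ is crossed by $\ell$ exactly when its two endpoints lie in different arcs, and fails to be crossed exactly when both of its endpoints lie in the same arc. So it suffices to rule out a chord with both endpoints on a single arc.

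Next I would argue by contradiction: suppose some chord, say $c_1$, has both endpoints $a, a'$ on the arc $A$. These two points cut $C$ into two arcs, and I would single out the sub-arc $\alpha \subset A$ lying strictly between $a$ and $a'$, namely the one avoiding the two points of $\ell \cap C$. The key input is the standard description of chord crossings: two chords with distinct endpoints intersect precisely when their endpoints alternate around $C$. Hence, since every other chord $c_j$ (for $2 \le j \le n$) intersects $c_1$, each such $c_j$ must have exactly one endpoint in the open arc $\alpha$ and its other endpoint outside $\alpha$.

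The contradiction is then a simple count. The $n-1$ chords $c_2,\dots,c_n$ contribute $n-1$ distinct endpoints lying in $\alpha$. But $\alpha$ is contained in $A$, which carries only $n$ endpoints in total, two of which are $a$ and $a'$ themselves; since all endpoints are distinct, $\alpha$ can contain at most $n-2$ endpoints. As $n-1 > n-2$, this is impossible, so no chord has both endpoints on the same arc, and therefore $\ell$ crosses every $c_i$.

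I expect the only mildly delicate point to be the opening topological setup: checking that $\ell$ meets $C$ in exactly two points and that the two open half-planes correspond cleanly to the two open arcs, so that the "$n$ endpoints per side" condition becomes "$n$ endpoints per arc." Once that correspondence is nailed down, the heart of the argument — using the alternation characterization to force each remaining chord to spend an endpoint inside the short arc $\alpha$, and then pigeonholing against the count $n-2$ — is essentially forced and requires no further calculation.
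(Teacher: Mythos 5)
Your proof is correct, but it follows a genuinely different route from the paper's. The paper also argues by contradiction, but more economically: if some chord $c_i$ misses $\ell$ and lies, say, in the left open half-plane, then since each half-plane carries exactly $n$ endpoints, the number of chords lying entirely to the left must equal the number lying entirely to the right; hence there is a chord $c_j$ entirely to the right, and $c_i$, $c_j$ lie in disjoint open half-planes and so cannot intersect --- contradicting pairwise intersection. That argument needs only \emph{one} other chord and never invokes the alternation characterization of chord crossings. Your version instead plays the offending chord $c_1$ against \emph{all} the remaining chords: the alternation criterion forces each of $c_2,\dots,c_n$ to place an endpoint in the open sub-arc $\alpha$ between the endpoints of $c_1$, and the pigeonhole count $n-1>n-2$ finishes it. Both are valid and elementary; the paper's buys brevity and avoids the (standard but unproved-here) alternation lemma, while yours localizes the contradiction to a single arc and in fact shows the stronger statement that a chord with both endpoints on one side of $\ell$ cannot even meet $n-1$ pairwise-crossing companions. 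Your topological preliminaries (that $\ell$ is a secant, that open half-planes correspond to open arcs, and that a chord meets $\ell$ iff its endpoints are separated by $\ell$) are all sound and are implicitly used by the paper as well.
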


\begin{proof}
Assume for the sake of contradiction that some chord $c_i$ does not
intersect $\ell$. Then $c_i$ lies in one of the two open half-planes
defined by $\ell$, say to the left of $\ell$. Since $\ell$ bisects the $2n$ endpoints of the
chords, it follows that there
is another chord $c_j$ that does not intersect $\ell$ and which lies
in the half-plane
to the right of $\ell$. This implies that $c_i$ and $c_j$ do not
intersect, which is a contradiction.
\end{proof}

We are now ready to prove Theorem~\ref{th:bouchet}.

\medskip

\noindent {\it Proof of Theorem~\ref{th:bouchet}.} Consider a bipartite
graph $G$ such that its complement $\overline{G}$ is a circle
graph. In particular, for any vertex $v_i$ of $\overline{G}$ there is
a chord $c_i$ of some circle $C$ such that any two vertices $v_i$ and
$v_j$ are adjacent in $\overline{G}$ (equivalently, non-adjacent in
$G$) if and only if the chords $c_i$ and $c_j$ intersect.  Since $G$ is bipartite, the
vertices $v_1,\ldots,v_n$ (and the corresponding chords
$c_1,\ldots,c_n$) can be colored with colors red and blue such that any two
chords of the same color intersect. We can assume without loss of
generality that the endpoints of the $n$ chords are pairwise
distinct, so the coloring of the chords also gives a coloring of the
$2n$ endpoints with colors red or blue (with an even number of blue
endpoints and an even number of red endpoints). Since the $2n$
endpoints lie on the circle $C$, it
follows from Lemma~\ref{lem:hs-circle} that there exists a line $\ell$ simultaneously
bisecting the set of blue endpoints and the set of red endpoints.

On one side of $\ell$, reverse the order of the
endpoints of the chords $c_1,\ldots,c_n$ along the circle $C$.
Observe that crossing chords intersecting $\ell$ become non-crossing, and vice versa.
By Lemma~\ref{lem:cho}, $\ell$ intersects all the
chords $c_1,\ldots,c_n$, and thus the resulting circle graph is
precisely $G$. It follows that $G$ is a
circle graph, as desired.
\hfill $\Box$

\begin{acknowledgement}
The authors would like to thank Andr\'as Seb\H o for his remarks on an
early version of the draft.
\end{acknowledgement}

\end{document}